\documentclass[11pt,twoside,reqno]{amsart}
\usepackage[all]{xy}
        \usepackage {amssymb,latexsym,amsthm,amsmath,mathtools,dsfont}
        \usepackage{enumitem,color}

        \textheight = 8.3in
        \textwidth = 5.8in
        \setlength{\oddsidemargin}{.8cm}
        \setlength{\evensidemargin}{.8cm}

\usepackage{mathtools}

\usepackage{babel}

\usepackage{hyperref}
\long\def\symbolfootnote[#1]#2{\begingroup%
\def\thefootnote{\fnsymbol{footnote}}\footnote[#1]{#2}\endgroup}

\newcommand{\Sp}[2]{\ensuremath{\textup{Sp}({#1},\mathbb{F}_{q^{#2}})}}
\newcommand{\Un}[2]{\textup{U}({#1},\mathbb{F}_{q^{#2}})}
\newcommand{\fq}{\ensuremath{\mathbb{F}_q}}
\newcommand{\fqn}[1]{\mathbb{F}_{q^{#1}}}

\newcommand{\diag}{\textup{diag}}
\newcommand{\Mp}{$\widetilde{M}$\textup{-power }}
\newcommand{\GL}{\textup{GL}}

\newcommand{\On}[3]{\textup{O}^{#1}({#2},\mathbb{F}_{q^{#3}})}
\newcommand{\U}{\textup{U}}

\DeclareMathOperator{\ch}{Char}

\makeatletter
\def\imod#1{\allowbreak\mkern10mu({\operator@font mod}\,\,#1)}
\makeatother
\makeatletter
\renewcommand*\env@matrix[1][*\c@MaxMatrixCols c]{%
  \hskip -\arraycolsep
  \let\@ifnextchar\new@ifnextchar
  \array{#1}}
\makeatother
\usepackage[capitalize,nameinlink]{cleveref} 
\usepackage{comment} 
\usepackage{xcolor} 
\hypersetup{
    colorlinks,
    linkcolor={red!80!black},
    citecolor={green!80!black},
    urlcolor={blue!80!black}
}
\makeatletter
\def\Ddots{\mathinner{\mkern1mu\raise\p@
\vbox{\kern7\p@\hbox{.}}\mkern2mu
\raise4\p@\hbox{.}\mkern2mu\raise7\p@\hbox{.}\mkern1mu}}
\makeatother

\newtheorem{theorem}{Theorem}[section]
\newtheorem{question}{Question}[section]
\newtheorem{lemma}[theorem]{Lemma}
\newtheorem{corollary}[theorem]{Corollary}
\newtheorem{proposition}[theorem]{Proposition}
\newtheorem*{theorem*}{Theorem}
\theoremstyle{definition}
\newtheorem{definition}[theorem]{Definition}
\newtheorem{remark}[theorem]{Remark}

\numberwithin{equation}{section}
\newcommand{\ignore}[1]{}

\newcommand{\mynote}[1]{}
\begin{document}
\setcounter{section}{0}
\title{Powers in finite unitary groups}
\author[Panja S.]{Saikat Panja}
\email{panjasaikat300@gmail.com}
\address{Harish-Chandra Research Institute, Chhatnag Rd, Jhusi, Uttar Pradesh 211019, India}
\author[Singh A.]{Anupam Singh}
\email{anupamk18@gmail.com}
\address{IISER Pune, Dr. Homi Bhabha Road, Pashan, Pune 411 008, India}
\thanks{The first named author (Panja) is supported by PDF-M fellowship from HRI and the second named author (Singh) is funded by SERB through CRG/2019/000271 for this research.}
\date{\today}
\subjclass[2010]{20G40, 20P05}
\keywords{unitary, asymptotics, power map}


\begin{abstract}
Let $\Un{n}{2}$ denote the subgroup of unitary matrices of the general linear group $\GL(n,\fqn{2})$ which fixes a Hermitian form and $M\geq 2$ an integer. This is a companion paper to the previous works \cite{KuSi22} and \cite{PaSi22} where the elements of the groups $\GL(n,\fq)$, $\Sp{2n}{}$, $\On{\pm}{2n}{}$ and $\On{}{2n+1}{}$ which has an $M$-th root in the concerned group, have been described. Here we will describe the $M$-th powers in unitary groups for the regular semisimple, semisimple and cyclic elements. Our methods are parallel to those of the Memoir \emph{``A generating function approach to the enumeration of matrices in classical groups over finite fields"} by Fulman, Neumann and Praeger.
\end{abstract}
\maketitle
\section{Introduction}\label{sec:intro}
After the settlement of Ore's conjecture, \emph{Waring type problems} in group theory has emerged as an important area of research, posing new difficulties and beautiful results. This subject has a long history, which dates back to the work of Armand Borel, e.g. Borel dominance theorem. Given an element $w\in \mathcal{F}_l$ (the free group on $l$ generators), the map associated with $w$ by plugging elements of $G^l$ in $w$, is called a \emph{word map}. It was proved by A. Borel (in \cite{Bo83}) (and later by Larsen independently in \cite{La04}) that given a semisimple algebraic group $\mathcal{G}$ and a word map $w\colon \mathcal{G}^l\longrightarrow \mathcal{G}$, it is a dominant map ({that is, image is dense in $\mathcal{G}$ with respect to the Zariski topology}). The image of $w$ will be denoted as $w(\mathcal{G})$ hereafter as is the tradition in the subject. The result due to Borel reveals the surprising fact that $w(\mathcal{G})^2 = \mathcal{G}$. It is therefore natural to ask what happens when $\mathcal{G}$ is replaced by an arbitrary group. The general question in this direction is as follows.
\begin{question}
Let $G$ be a group and $w$ be a word. Then
\begin{enumerate}
    \item Is $w$ a surjective map on $G$?
    \item If $w$ is not a surjective map is there a constant $k_w\in\mathbb{N}$ such that $w(G)^{k_w}=\langle w(G)\rangle$.
\end{enumerate}
\end{question}
\noindent Several results have been proved in this direction in the last three decades. For example, it was shown first that for a finite simple group $G$, there exists a constant $c$ such that $G = w(G)^c$, where $w=x_1x_2x_1^{-1}x_2^{-1}\in\mathcal{F}_2$ (see \cite{Wi96} by J. Wilson). Later by collective effort, it was shown that $c$ can be chosen to be $1$. This is essentially Ore's conjecture. Around 1996, Martinez and Zelmanov proved that any element of a sufficiently large
simple group can be written as a product of $f(k)$ many $k$-th powers in \cite{MaZe96}. This was also independently proved by Saxl and Wilson in \cite{SaWi97}. This simple-looking result has far-reaching consequences. For example, in the paper \cite{SaWi97} the authors prove that if $G$ is a Cartesian product of nonabelian finite simple groups with $G$ being a finitely generated profinite group, then every subgroup $H$ of finite index in $G$ is open. 
Over ten years back A. Shalev proved that for a word $w$, there exists positive integer $N(w)$ such that every finite simple group $G$ having size more than $N(w)$ satisfies $w(G)^3=G$ (see \cite[Theorem 1.1]{Sh09}). Later this was improved to the result that $w(G)^2=G$ for sufficiently large finite simple groups of Lie type $G$, depending upon $w$, by Larsen and Larsen in \cite{LaSha09}. In fact, there are words $w$ for which $w(G)\neq G$, and one of the primary examples is a power map.

Fix a positive integer $M\geq 2$. In this article, we choose to focus on the word $x^M\in\mathcal{F}_1$.  The corresponding word map will be called \emph{power map}. Our main focus will be to compute probabilities of an element of a particular type to be $M$-th power. We call an element $g\in G$ to be $M$-th power if there exists $\alpha\in G$ such that $\alpha^M=G$. This has been of prime interest for quite some time now. We started looking at this through the approach of statistical group theory. This enables us to answer this question in terms of generating functions. This has been done in \cite{KuSi22} for finite general linear groups, and in \cite{PaSi22} for finite symplectic and orthogonal groups. We will answer this question for unitary groups here. We start with the 
description of conjugacy classes and centralizers of an element in \autoref{sec:conjugacy-centralizer}. This will be used to deduce the problem in the realm of counting conjugacy classes of a particular type, rather than counting elements of a particular type. In \autoref{sec:poly}
we define the $\widetilde{M}$-power polynomial which will help us
deduce which conjugacy classes are the $M$-th powers. Later on in \autoref{sec:sep}, \autoref{sec:cyc} and \autoref{sec:semisimple-matrices} we deduce the main results.
\subsection*{Acknowledgement} A part of this work has been completed during the first named author's visit to ISI Bangalore in the Fall of 2022. He acknowledges the support of ISI Bangalore during this visit.
\section{Conjugacy classes and centralizer in unitary groups}\label{sec:conjugacy-centralizer}
This section briefly recalls the definitions and mentions the results about conjugacy classes and centralizers of elements in unitary groups. 
The exposition here is mainly based on the works \cite{wa63} by G. E. Wall and \cite{ta21} by D. E. Taylor.
For a prime $p$ consider $q=p^l$ for some $l\in \mathbb{N}$. 
Consider the field $k=\mathbb F_{q^2}$, and the map $\sigma \colon \fqn{2}\longrightarrow\fqn{2}$ defined as $\sigma(a):=\overline{a}=a^q$ which is an order two automorphism of the field. This further induces an automorphism of
$\fqn{2}[t]$, the polynomial ring over      $\fqn{2}$. The image of
$f\in\fqn{2}[t]$ will be denoted as $\overline{f}$. Consider the Hermitian form given by the matrix
\[
\Lambda_n=
\begin{pmatrix}
0 & 0 & \ldots & 0 & 1 \\
0 & 0 & \ldots & 1 & 0 \\
\vdots & \vdots & \Ddots & \vdots & \vdots \\
0 & 1 & \ldots & 0 & 0 \\
1 & 0 & \ldots & 0 & 0 
\end{pmatrix}.
\]
Then we identify the unitary group (corresponding to the above Hermitian form) with the set
\[
\U(n,\fqn{2})=\left\{A\in\GL(n,\fqn{2})\mid  A\Lambda_n\overline{A}^t=\Lambda_n\right\}.
\]
Since all non-degenerate Hermitian forms on a $k^n$ are unitarily congruent to the standard one that is the Hermitian form given by the above matrix,
we get that all the unitary groups arising are conjugate with each other inside
$\GL(n,\fqn{2})$ (see theorem $10.3$ of \cite{Gr02}). From the work of Wall, it is known that two elements $g_1, g_2 \in\U(n,\fqn{2})$ are conjugate to each other if and only if they are conjugate in $\GL(n,\fqn{2})$.
These conjugacy classes are parameterized by special polynomials and partitions.
We call a polynomial $f$ of degree $d$ to be \emph{$\sim$-symmetric} if $\widetilde{f}=f$ where $\widetilde{f}(t) = \overline{f(0)}^{-1} t^d \bar f(t^{-1})$. A polynomial without a proper $\sim$-symmetric factor will be called a \emph{$\sim$-irrdeucible} polynomial. Note that a $\sim$-irreducible polynomial can be reducible in the usual sense.
Denote by $\widetilde{\Phi}$ to be the set of all $\sim$-irreducible polynomials. 
Let $\mathcal{P}_{n}$ denote the set of all partitions of numbers $\leq n$. Then a conjugacy class in $\U(n,\fqn{2})$ is determined by a function $\lambda\colon \widetilde{\Phi} \longrightarrow \mathcal{P}_n$ which satisfies
\begin{enumerate}
    \item $\lambda(f)=\lambda(\widetilde{f})$,
    \item $\sum\limits_{\phi}|\lambda(\phi)|\deg\phi=n$.
\end{enumerate}

We recall the definitions of several kinds of matrices here, for which the existence of $M$-th root will be discussed in this article. A matrix $A$ is said to be \emph{separable} if its characteristic polynomial is separable over $\fqn{2}$, \emph{semisimple} when the minimal polynomial is separable. Call a matrix
$A$ to be cyclic if the minimal polynomial is same as the characteristic polynomial
of $A$.

\section{Special polynomials}\label{sec:poly}
For our work, we need to deal with certain kinds of polynomials which we describe now.
\begin{definition}
For an integer $M\geq 2$, a self-conjugate irreducible monic (SCIM) polynomial $f$ of degree $d$ will be called a \emph{$\widetilde{M}$-power polynomial} if $f(x^M)$ has a SCIM factor $g$ of degree $d$. 
\end{definition}
\begin{lemma}\label{lem:number-of-polynomials}
    Let $\widetilde{N}_{M}(q,d)$ denote the number of $\widetilde{M}$-power SCIM polynomials of degree $d$. Then we have 
\begin{align*}
\widetilde{N}_{M}(q,d)=\dfrac{1}{d(M,q^{2d}-1)}\sum\limits_{l|d}^{}\mu(l)\left(M(q^{2d/l}-1),q^d+1\right).\end{align*}
\end{lemma}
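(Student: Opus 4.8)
The plan is to convert the statement into a counting problem for roots of unity in $\fqn{2d}^\times$, and then extract the closed form by Möbius inversion followed by a greatest-common-divisor manipulation. First I would determine which field elements arise as roots of a degree-$d$ SCIM polynomial. A monic irreducible $f$ of degree $d$ over $\fqn{2}$ has root set a single orbit $\{\alpha,\alpha^{q^2},\ldots,\alpha^{q^{2(d-1)}}\}$ of the Frobenius $\phi\colon x\mapsto x^{q^2}$, with $\alpha\in\fqn{2d}$ of degree $d$. The self-conjugacy $\widetilde f=f$ means exactly that this orbit is stable under $\tau\colon x\mapsto\bar x^{-1}=x^{-q}$. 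Since $\tau^2=\phi$ and $\tau$ commutes with $\phi$ on $\fqn{2d}$, stability forces $\tau=\phi^{k}$ on the orbit with $2k\equiv 1\pmod d$; this is solvable only for odd $d$ (for even $d$ there are no degree-$d$ SCIM polynomials at all), and then $2k-1=d$, giving the clean criterion $\alpha^{q^{d}+1}=1$. Hence the roots of degree-$d$ SCIM polynomials are precisely the degree-$d$ elements of the cyclic group $\mathcal N:=\{x\in\fqn{2d}^\times : x^{q^{d}+1}=1\}$ of order $q^{d}+1$, and each such polynomial contributes exactly $d$ of them.

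Next I would record the $\widetilde M$-power condition at the level of roots. If $g$ is a degree-$d$ SCIM factor of $f(x^M)$ with root $\beta$, then $\beta\in\mathcal N$ by the previous paragraph and $\beta^M$ is a root of $f$; conversely, if $\alpha$ is a root of $f$ and $\alpha=\beta^M$ with $\beta\in\mathcal N$, then $\deg\beta\mid d$ while $\deg(\beta^M)=d$ forces $\deg\beta=d$, so $\beta$ is a root of a degree-$d$ SCIM factor of $f(x^M)$. Thus $f$ is a $\widetilde M$-power polynomial if and only if its roots lie in the image of the $M$-th power map on $\mathcal N$. As $\mathcal N$ is cyclic of order $q^{d}+1$, that image is the subgroup $\mathcal N^{M}$ of order $(q^{d}+1)/e$, where $e:=\gcd(M,q^{d}+1)$.

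It then remains to count the elements of $\mathcal N^{M}=\{x : x^{(q^{d}+1)/e}=1\}$ of degree exactly $d$ over $\fqn{2}$, and divide by $d$. An element of $\mathcal N^{M}$ lies in the subfield $\fqn{2d/l}$ precisely when its order divides $q^{2d/l}-1$, and in the cyclic group $\mathcal N^{M}$ the number of such elements is $\gcd\!\big(q^{2d/l}-1,(q^{d}+1)/e\big)$. Möbius inversion over the divisors of $d$ therefore yields
\[
\widetilde N_M(q,d)=\frac1d\sum_{l\mid d}\mu(l)\,\gcd\!\Big(q^{2d/l}-1,\ \tfrac{q^{d}+1}{e}\Big).
\]

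Finally I would bring this to the asserted closed form using the elementary identity
\[
\gcd(M,q^{d}+1)\cdot\gcd\!\Big(a,\ \tfrac{q^{d}+1}{\gcd(M,q^{d}+1)}\Big)=\gcd\big(Ma,\ q^{d}+1\big),
\]
valid for every integer $a$ (here $a=q^{2d/l}-1$), which moves the factor $e$ out of each summand and into the denominator. I expect this last step to be the main obstacle: it must be checked prime-by-prime by comparing $p$-adic valuations, carefully tracking how $e=\gcd(M,q^{d}+1)$ interacts with the numerator $\gcd\big(M(q^{2d/l}-1),q^d+1\big)$ (and, in reconciling the prefactor, with $\gcd(M,q^{2d}-1)$, using that $q^{d}-1$ and $q^{d}+1$ share only the prime $2$). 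The reductions in the earlier steps are conceptually routine once the observation $\tau^2=\phi$ is in hand; the substantive content is this number-theoretic simplification.
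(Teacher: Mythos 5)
Your derivation is essentially the paper's own argument --- pass to roots lying in the norm-one subgroup $\mathcal{N}=\{x\in\fqn{2d}^{\times}: x^{q^{d}+1}=1\}$, characterize the $\widetilde{M}$-power condition as membership of the roots of $f$ in $\mathcal{N}^{M}$, and count degree-$d$ elements by inclusion--exclusion over subfields --- but at the single point where the two diverge, your version is correct and the paper's is not. The paper counts the $\beta$'s and divides by $\lvert\theta_M^{-1}(1)\rvert=(M,q^{2d}-1)$, the kernel of the $M$-th power map on all of $\fqn{2d}^{\times}$; since both $\alpha$ and $\beta$ are constrained to lie in $\mathcal{N}$, the correct fibre size is $\lvert\ker\theta_M\cap\mathcal{N}\rvert=(M,q^{d}+1)$, exactly as in your computation of $\lvert\mathcal{N}^{M}\rvert=(q^{d}+1)/e$ with $e=(M,q^{d}+1)$. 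Your gcd identity $\gcd(Ma,N)=\gcd(M,N)\cdot\gcd\bigl(a,N/\gcd(M,N)\bigr)$ is true (compare $p$-adic valuations: with $\mu=v_p(M)$, $\alpha=v_p(a)$, $\nu=v_p(N)$, both sides have valuation $\min(\mu+\alpha,\nu)$), and it converts your formula into $\frac{1}{d\,(M,q^{d}+1)}\sum_{l\mid d}\mu(l)\bigl(M(q^{2d/l}-1),q^{d}+1\bigr)$.

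The step you flagged as ``the main obstacle'' --- reconciling $e=(M,q^{d}+1)$ with the printed prefactor $(M,q^{2d}-1)$ --- is genuinely impossible, because the lemma as printed is false whenever $(M,q^{2d}-1)\neq(M,q^{d}+1)$, i.e.\ whenever $M$ shares with $q^{d}-1$ a factor it does not share with $q^{d}+1$. Concretely, take $q=4$, $d=1$, $M=3$: there are five SCIM linear polynomials $t-\alpha$ with $\alpha^{5}=1$, and cubing is a bijection on the order-$5$ group $\mathcal{N}$, so $\widetilde{N}_{3}(4,1)=5$; the printed formula gives $\gcd(45,5)/\gcd(3,15)=5/3$, not even an integer. (Similarly $q=2$, $d=3$, $M=7$ gives the true value $2$ against the printed $2/7$.) So do not force the last step: your closed form is the corrected statement, and the paper's proof errs precisely where you were suspicious, namely in dividing by the kernel size computed in the whole multiplicative group rather than inside $\mathcal{N}$. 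One small caveat elsewhere in your write-up: the identification of roots of degree-$d$ SCIM polynomials with degree-$d$ elements of $\mathcal{N}$ is valid only for odd $d$ (as you note, no SCIM polynomial has even degree, although degree-$d$ elements of $\mathcal{N}$ may still exist for even $d$), so the formula must be read with $d$ odd, which is how $\widetilde{N}_{M}(q,d)$ is used in the paper's generating functions.
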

\begin{proof}
Let $f$ be an \Mp SCIM polynomial of degree $d$. Then $f(x^M)$ has a SCIM factor of degree $d$, say $g$.
Now consider $f,g\in\fqn{2d}[x]$. Then we get that
\begin{align*}
    f(x)=\prod\limits_{i=1}^d(x-\alpha_i)\,\text{ and }\,g(x)=\prod\limits_{i=1}^d(x-\beta_i).
\end{align*}
Note that we may assume that $\beta_i^M=\alpha_i$, after a permutation of the indices.
Consider the map $\theta_M\colon \fqn{2d}\longrightarrow\fqn{2d}$ given by $\theta_M(\delta)=\delta^M$. Then $\alpha_i\in\textup{Im}(\theta)$. 
Since $f$ is an SCIM polynomial we also have that $\alpha_i^{q^d+1}=1$.
Hence we have that
\begin{align*}
    \beta_i^{M(q^d+1)=\beta_i^{q^{2d}-1}=1},
\end{align*}
and that $\beta_i^M=\alpha_i$ generates $\fqn{2d}$ over $\fqn{2}$.

Conversely suppose $\alpha$ satisfies $\alpha^{q^d+1}=1$ and generates $\fqn{2d}$ over $\fqn{2}$. If $\varphi$ is the minimal polynomial of $\alpha$ over $\fqn{2}$, then 
it is of degree $d$. Also all roots $\eta$ of $\varphi$ satisfy that $\eta^{q^d+1}=1$. Thus if $\widetilde{N}_M(q,d)$ denotes the number of \Mp SCIM polynomials, we will have that
\begin{align*}
    \widetilde{N}_M(q,d)=\dfrac{1}{d}\left|\left\{\alpha\in\fqn{2d}\middle\vert\substack{\alpha^{q^d+1}=1,\alpha=\theta_M(\beta),\\\beta^{q^d+1}=1,\beta\in\fqn{2d},\fqn{2d}=\fqn{2}(\alpha)}\right\}\right|.
\end{align*}
Since $\lvert\theta_M^{-1}(1)\rvert=(M,q^{2d}-1)$, we get that
\begin{align*}
    \widetilde{N}_M(q,d)=\dfrac{1}{d(M,q^{2d}-1)}\left\lvert\left\{\beta\in\fqn{2d}\middle\vert \beta^{q^d+1}=1,\fqn{2d}=\fqn{2}(\beta^M)\right\}\right\rvert.
\end{align*}
To ensure that $\fqn{2d}=\fqn{2}(\beta^M)$, we must have that $\beta^M\not\in\fqn{d/l}$ for all $l|d$ where $l>1$. To count this, define
\begin{align*}
    E_l=\left\{\beta\in\fqn{2d}\middle\vert \beta^{q^d+1}=1,\fqn{2d/l}=\fqn{2}(\beta^M)\right\}.
\end{align*}
Then we have that $\lvert E_l\rvert=(M(q^{2d/l}-1),q^{d+1})$. Hence by the inclusion-exclusion principle, we get that
\begin{align*}
    \widetilde{N}_M(q,d)=\dfrac{1}{d(M,q^{2d}-1)}\sum\limits_{l|d}\mu(l)\left(M(q^{2d/l}-1),q^d+1\right)
\end{align*}
\end{proof}

\section{Separable matrices}\label{sec:sep}
\begin{lemma}\label{le:root-implies-m}
Let $f$ be an SCIM polynomial of odd degree $d\geq 1$ and $\alpha^M=C_f$. Then $f(x^M)$ has a SCIM factor of degree $d$.
\end{lemma}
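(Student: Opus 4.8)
The plan is to read the hypothesis $\alpha^M=C_f$ as an identity \emph{inside} the unitary group $\Un{d}{2}$ (by Wall's result that conjugacy in $\GL(d,\fqn{2})$ agrees with conjugacy in $\Un{d}{2}$, I may assume $C_f\in\Un{d}{2}$), and to exploit that $C_f$ is regular semisimple, which pins $\alpha$ down inside a single maximal torus. Concretely, since $f$ is irreducible of degree $d$, the algebra $\fqn{2}[C_f]$ is the field $\fqn{2d}$, with $C_f$ corresponding to a root $\beta$ of $f$ that generates $\fqn{2d}$ over $\fqn{2}$. As $C_f$ is cyclic, its centralizer in $\GL(d,\fqn{2})$ is exactly $\fqn{2}[C_f]^\times\cong\fqn{2d}^\times$; since $\alpha$ commutes with $\alpha^M=C_f$, we get $\alpha\in\fqn{2}[C_f]$, say $\alpha\leftrightarrow\gamma\in\fqn{2d}^\times$ with $\gamma^M=\beta$.

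First I would record two routine consequences. Because $\fqn{2}(\gamma)\supseteq\fqn{2}(\gamma^M)=\fqn{2}(\beta)=\fqn{2d}$, the element $\gamma$ also generates $\fqn{2d}$ over $\fqn{2}$, so its minimal polynomial $g$ is irreducible of degree exactly $d$; and since $f(\gamma^M)=f(\beta)=0$, this $g$ divides $f(x^M)$. Hence $f(x^M)$ automatically carries an irreducible factor of degree $d$, and the whole content of the lemma is that $g$ may be taken \emph{self-conjugate}, i.e.\ SCIM.

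The crux is to use the unitary hypothesis together with the parity of $d$. Since $\alpha$ is unitary and lies in $\fqn{2}[C_f]^\times$, it lies in the maximal torus $\Un{d}{2}\cap\fqn{2d}^\times$; for odd $d$ this is the norm-one (Coxeter) torus $\{\gamma\in\fqn{2d}^\times:\gamma^{q^d+1}=1\}$, cyclic of order $q^d+1$. Thus the hypothesis forces $\gamma^{q^d+1}=1$, equivalently $\gamma^{q^d}=\gamma^{-1}$, so that $\gamma^{-q}=\gamma^{q^{d+1}}=\gamma^{(q^2)^{(d+1)/2}}$, where oddness of $d$ is used precisely so that $d+1$ is even and $q^{d+1}$ is a power of the Frobenius $x\mapsto x^{q^2}$ of $\fqn{2d}/\fqn{2}$. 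Therefore $\gamma^{-q}$ is a Galois conjugate of $\gamma$, so the root set of $g$ is stable under $x\mapsto x^{-q}$, which is exactly the condition $\widetilde{g}=g$. Hence $g$ is an SCIM factor of $f(x^M)$ of degree $d$, as required.

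The step I expect to be the main obstacle, and the one place where both the unitary constraint and the oddness of $d$ are indispensable, is the identification of the torus through $C_f$ with the norm-one subgroup $\{\gamma^{q^d+1}=1\}$, i.e.\ the claim that a unitary $M$-th root of $C_f$ must satisfy $\gamma^{q^d+1}=1$. It is worth emphasizing that this fails once the unitary assumption is dropped: for example $x+1$ is SCIM of degree $1$, and taking $q=3$, $M=4$, the polynomial $x^M+1$ has all its roots primitive $8$th roots of unity, none lying on the norm-one circle $\gamma^{q+1}=1$; correspondingly $\alpha^M=C_f=(-1)$ is solvable in $\GL(1,\fqn{2})$ but \emph{not} in $\Un{1}{2}$. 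This example is the check I would keep in mind to ensure the torus computation is performed inside $\Un{d}{2}$ rather than merely in $\GL(d,\fqn{2})$.
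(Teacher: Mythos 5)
Your proof is correct, and while it lands on the same pivot as the paper --- the minimal polynomial $g$ of the root is the desired SCIM factor of $f(x^M)$ --- it gets there by a genuinely different mechanism. The paper views $C_f$ in $\Un{d}{2d}$, diagonalizes it, replaces $\alpha$ by its semisimple part, observes that each eigenvalue $\alpha_i$ of $\alpha$ satisfies $f(\alpha_i^M)=0$ so that the minimal polynomial $g$ of $\alpha$ divides $f(x^M)$, and then simply asserts that $g$ is irreducible and SCIM because $\alpha$ is unitary. You instead place $\alpha$ in the centralizer $\fqn{2}[C_f]^\times\cong\fqn{2d}^\times$ of the cyclic matrix $C_f$, which makes $\alpha$ automatically semisimple (so the reduction to the semisimple part disappears), deduce irreducibility and $\deg g=d$ from $\fqn{2}(\gamma)\supseteq\fqn{2}(\gamma^M)=\fqn{2d}$, and prove the self-conjugacy by hand: the unitary condition confines $\gamma$ to the norm-one torus $\gamma^{q^d+1}=1$, whence $\gamma^{-q}=\gamma^{q^{d+1}}=\gamma^{(q^2)^{(d+1)/2}}$ is a Galois conjugate of $\gamma$ over $\fqn{2}$ (oddness of $d$ entering exactly here), which is precisely $\widetilde{g}=g$. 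What your route buys is that the three facts the paper's sketch leaves implicit --- irreducibility of $g$, the degree count, and SCIM-ness --- are all proved rather than asserted, and your $\gamma^{q^d+1}=1$ bookkeeping dovetails with the computation in \cref{lem:number-of-polynomials}, making transparent that the lemma and that count enumerate the same objects; what the paper's route buys is brevity, at the cost of a black-box appeal to the self-conjugacy of characteristic polynomials of unitary elements. The one step you flag as the main obstacle and then assert --- that $\Un{d}{2}\cap\fqn{2}[C_f]^\times$ is the norm-one subgroup --- is most cleanly settled by citing the centralizer order $q^d+1$ (Wall, or \cite{fnp05}, as the paper itself does in the proof of \cref{th:probability-separable}): since $\fqn{2d}^\times$ is cyclic, its unique subgroup of order $q^d+1$ is $\{\gamma : \gamma^{q^d+1}=1\}$. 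Your sanity check with $q=3$, $M=4$, $f=x+1$ is apt: $-1$ is a fourth power in $\GL(1,\fqn{2})$ but not in $\Un{1}{2}$, confirming that the unitary hypothesis is doing real work in your torus step.
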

\begin{proof}
Considering $C_f$ in $\Un{d}{2d}$ we have that $C_f\sim\diag(\lambda_1,\lambda_2,\ldots,\lambda_d)$, since it is semisimple. Given that $\alpha^M=C_f$, we can assume $\alpha$ to be semisimple.
Then $\alpha\sim\diag(\alpha_1,\alpha_2,\ldots,\alpha_d)$ where $\alpha_i$'s are roots of minimal polynomial of $\alpha$. Then $\alpha_i^M=\lambda_{j(i)}$. We may assume that $j$ is the identity function, after permutation.

Hence $f(\alpha_i^M)=0$. Considering $H(x)=f(x^M)$ we get that $H(\alpha_i)=0$. Hence $\min_{\fq}(\alpha)|H$. Since $g=\min_{\fq}(\alpha)|H$ is irreducible and $\alpha\in\Un{d}{q}$, we get that $g$ is SCIM and is the required polynomial.
\end{proof}
\begin{lemma}\label{lem:m-implies-root}
Let $f$ be a SCIM polynomial of odd degree $d$ and $f(x^M)$ has a SCIM factor $g$ of degree $d$. Then there exists $\alpha\in\Un{d}{2}$ such that $\alpha^M=C_f$.
\end{lemma}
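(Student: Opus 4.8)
The plan is to run the argument of \autoref{le:root-implies-m} in reverse: starting from the SCIM factor $g$ of $f(x^M)$, I will manufacture a semisimple unitary element whose $M$-th power has characteristic polynomial $f$, and then invoke Wall's theorem to move the resulting conjugacy inside $\Un{d}{2}$.

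First I would fix a root $\beta\in\fqn{2d}$ of $g$. Since $g$ is SCIM of degree $d$ with $d$ odd, the analysis underlying \autoref{lem:number-of-polynomials} shows that $\beta^{q^d+1}=1$ and $\fqn{2}(\beta)=\fqn{2d}$; in particular the full root set of $g$ is the Frobenius orbit $\{\beta^{q^{2i}}:0\le i<d\}$. Because $g$ divides $f(x^M)$ and $g(\beta)=0$, we get $f(\beta^M)=0$, so $\lambda:=\beta^M$ is a root of $f$. As $f$ is irreducible of degree $d$, $\lambda$ generates $\fqn{2d}$ over $\fqn{2}$, its minimal polynomial is $f$, and the roots of $f$ are exactly $\{\lambda^{q^{2i}}:0\le i<d\}$.

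Next I would realize $g$ inside the group. By the parameterization recalled in \autoref{sec:conjugacy-centralizer}, the SCIM polynomial $g$ of degree $d$, assigned the partition $(1)$, determines a regular semisimple conjugacy class of $\Un{d}{2}$; let $\alpha$ be a unitary representative, so that $\alpha$ is semisimple with characteristic polynomial $g$ and eigenvalues $\{\beta^{q^{2i}}\}$. Then $\alpha^M\in\Un{d}{2}$ is again semisimple, and its eigenvalues are $\{\beta^{Mq^{2i}}\}=\{\lambda^{q^{2i}}\}$, which is precisely the root set of $f$. Hence the characteristic polynomial of $\alpha^M$ equals $f$, and since $f$ is separable, $\alpha^M$ is regular semisimple. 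Writing $C_f$ for the unitary representative of the class attached to $f$, both $\alpha^M$ and $C_f$ are unitary elements with the same separable characteristic polynomial $f$, hence share a rational canonical form and are conjugate in $\GL(d,\fqn{2})$. By Wall's theorem, elements of $\Un{d}{2}$ that are $\GL$-conjugate are already conjugate in $\Un{d}{2}$; choosing $h\in\Un{d}{2}$ with $h\alpha^M h^{-1}=C_f$ and setting $\alpha':=h\alpha h^{-1}\in\Un{d}{2}$ yields $(\alpha')^M=C_f$, as required.

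The step I expect to be the main obstacle is the realization step: one must know that the degree-$d$ SCIM factor $g$ genuinely arises as the characteristic polynomial of a regular semisimple unitary element of $\Un{d}{2}$. This is exactly where the hypothesis that $d$ is odd enters, since oddness forces the root condition $\beta^{q^d+1}=1$ (the symmetry $\lambda\mapsto\bar\lambda^{-1}$ of an irreducible $\sim$-symmetric polynomial is compatible with its Frobenius orbit only when the relevant index $(d+1)/2$ is an integer), and hence guarantees that the cyclic torus of order $q^d+1$ embeds in $\Un{d}{2}$ with $\alpha$ as a generator. Once this realization is in place, the eigenvalue bookkeeping of the previous paragraph and the appeal to Wall's theorem are routine.
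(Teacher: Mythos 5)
Your proof is correct and takes essentially the same route as the paper's: the paper likewise passes to the unitary companion element $C_g$ of the SCIM factor $g$, matches roots of $g$ and $f$ under the $M$-th power map over $\fqn{2d}$ to conclude that the characteristic polynomial of $C_g^M$ equals $f$, and then (implicitly, via the fact recalled in \autoref{sec:conjugacy-centralizer} from Wall's work) upgrades $\GL(d,\fqn{2})$-conjugacy to conjugacy in $\Un{d}{2}$. Your write-up simply makes explicit the two steps the paper leaves tacit, namely the realization of $g$ as a regular semisimple unitary class via the partition $(1)$ and the appeal to Wall's theorem.
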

\begin{proof}
    We will show that $C_g^M\sim C_f$. In $\fqn{2d}$ we have that 
    \begin{align*}
        f(x)&=\prod_{i=1}^{d}(x-\lambda_i)\\
        g(x)&=\prod_{i=1}^{d}(x-\alpha_i),
    \end{align*}
    for the appropriate choice of $\alpha_i$s and $\lambda_i$s. Since $g(x)|f(x^M)$, 
    we obtain that
    \begin{align*}
        \prod_{i=1}^{d}(\alpha_1^M-\lambda_i)=0.
    \end{align*}
    This implies $\alpha_1^M=\lambda_j$ for some $j$, which after permutation can be assumed to be $1$. If $h$ is characteristic polynomial of $C_g^M$, then $h(\lambda_1)=0$. So $f=h$ and this finishes the proof.
\end{proof}
\begin{proposition}\label{prop:sep-conj-class}
    Let $cS_U^M(n,q)$ denote the number of conjugacy classes of separable matrices in $\Un{n}{2}$ which are $M$-th power. Let $cS_U^M(z)=1+\sum\limits_{n=1}^\infty cS_U^M(n,q)$. Then we have that
    \begin{align*}
        cS_U^M(z)=\prod_{d=\text{odd}}(1+z^d)^{\widetilde{N}_M(q,d)}\prod_{d\geq 1}(1+z^{2d})^{\widetilde{R}_M(q,d)}.
    \end{align*}
\end{proposition}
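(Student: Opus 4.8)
The plan is to reduce the enumeration of separable $M$-th-power classes to an independent condition on the irreducible factors of the characteristic polynomial, and then read off the generating function. A separable matrix is regular semisimple, so its characteristic and minimal polynomials coincide and its conjugacy class in $\Un{n}{2}$ is determined solely by its characteristic polynomial; moreover that polynomial is a product of \emph{distinct} $\sim$-irreducible factors (every partition value equals $(1)$). By the parametrization recalled in \autoref{sec:conjugacy-centralizer}, these factors come in exactly two flavours: SCIM polynomials $f$ with $\widetilde{f}=f$, which are necessarily of odd degree $d$ (self-conjugacy forces $\alpha\mapsto\alpha^{-q}$ to be an $\fqn{2}$-Galois conjugate of a root $\alpha$, which is possible only when $d$ is odd) and contribute a block of size $d$; and conjugate pairs $\{g,\widetilde{g}\}$ with $g\neq\widetilde{g}$ and $\deg g=d$, where the constraint $\lambda(g)=\lambda(\widetilde{g})$ forces both to appear, contributing a block of size $2d$.

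The central step is to show that a separable $B\in\Un{n}{2}$ is an $M$-th power if and only if each of its primary blocks is an $M$-th power in the corresponding smaller group. For the forward direction, any root $A$ with $A^M=B$ commutes with $B$, hence preserves the primary decomposition of $B$ and splits as $A=\bigoplus_i A_i$ with $A_i^M=C_{f_i}$; since $C_{f_i}$ is regular semisimple, $A_i$ in fact lies in its centralizer, a maximal torus. Conversely, given $M$-th roots of the individual blocks one forms their direct sum, which is unitary because the Hermitian form is block-diagonal for this decomposition. This is where the main work lies: it rests on the centralizer computation of \autoref{sec:conjugacy-centralizer}, which identifies the centralizer of an SCIM block of degree $d$ as a maximal torus of $\Un{d}{2}$ and the centralizer of a pair block $\{g,\widetilde{g}\}$ as a general-linear group, the two dual components being interchanged by the Hermitian form.

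With the reduction in hand, I would invoke the block-level criteria already proved. For an SCIM block $C_f$ of odd degree $d$, \autoref{le:root-implies-m} together with \autoref{lem:m-implies-root} shows that $C_f$ admits an $M$-th root in $\Un{d}{2}$ precisely when $f$ is a \Mp polynomial, and by \autoref{lem:number-of-polynomials} there are exactly $\widetilde{N}_M(q,d)$ of these; they exist only for odd $d$. For a pair block of total degree $2d$ the centralizer is general linear, so being an $M$-th power reduces to whether $C_g$ is an $M$-th power in $\GL(d,\fqn{2})$, which is governed by the criterion of \cite{KuSi22}; write $\widetilde{R}_M(q,d)$ for the number of conjugate pairs $\{g,\widetilde{g}\}$ of total degree $2d$ that satisfy this condition.

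Finally, I would assemble the generating function. Because separability forces each $\sim$-irreducible factor to occur at most once, an admissible separable $M$-th-power class is exactly a choice of a finite subset of the allowed factors, and these choices are mutually independent. Each of the $\widetilde{N}_M(q,d)$ admissible SCIM polynomials of odd degree $d$ is either selected (contributing $z^d$) or omitted (contributing $1$), giving a factor $(1+z^d)^{\widetilde{N}_M(q,d)}$; likewise each of the $\widetilde{R}_M(q,d)$ admissible pairs of total degree $2d$ contributes $(1+z^{2d})^{\widetilde{R}_M(q,d)}$. Taking the product over all degrees yields
\[
cS_U^M(z)=\prod_{d=\text{odd}}(1+z^d)^{\widetilde{N}_M(q,d)}\prod_{d\geq 1}(1+z^{2d})^{\widetilde{R}_M(q,d)},
\]
as claimed. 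The chief obstacle is the blockwise reduction of the second paragraph; once it is established, the remaining assembly is bookkeeping.
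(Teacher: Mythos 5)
Your proposal is correct and takes essentially the same route as the paper's proof: decouple the $M$-th power condition over the distinct $\sim$-irreducible factors of the characteristic polynomial (each occurring with multiplicity one by separability), handle the SCIM blocks via \cref{le:root-implies-m} and \cref{lem:m-implies-root} with count $\widetilde{N}_M(q,d)$, handle the conjugate pairs $\{g,\widetilde{g}\}$ via the $\GL$-criterion with count $\widetilde{R}_M(q,d)$, and expand the independent select-or-omit choices into the stated product. The only difference is one of rigor, in your favor: you make explicit the blockwise reduction (a root of a regular semisimple element lies in its centralizer, hence respects the primary decomposition), the fact that SCIM factors have odd degree, and the definition of $\widetilde{R}_M(q,d)$, all of which the paper's terse proof leaves implicit.
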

\begin{proof}
    Let $A$ be a separable matrix and the combinatorial data attached with $A$ 
    is given by $\Delta_A=\{(f,\lambda_f)|f\in\Phi\setminus\{0\}\}$. 
    Since $A$ is separable, we must have that $\lambda_f\in\{0,1\}$. 
    Thus in view of \cref{le:root-implies-m} and \cref{lem:m-implies-root} we get that $X$ has an $M$-th root if and only if
    \begin{enumerate}
        \item for $f=\widetilde{f}$, we have $f\in\widetilde{\Phi}_M$,
        \item for $f\neq \widetilde{f}$, we have $f\in\Phi_M\setminus\widetilde{\Phi}_M$.
    \end{enumerate}
    Hence the characteristic polynomial of $X$ is given by
    \begin{align*}
        \ch_A(x)=\prod f\prod g\widetilde{g},
    \end{align*}
    for appropriate choice of $f=\widetilde{f}\in\widetilde{\Phi}_M$ and $g\neq \widetilde{g}\in\Phi_M\setminus\widetilde{\Phi}_M$, depending upon $A$. Thus using \cref{lem:number-of-polynomials}, we conclude that,
    \begin{align*}
        cS_U^M(z)&=\prod\limits_{f\in\widetilde{\Phi}_M}\left(1+z^{\deg{f}}\right)
        \prod\limits_{\substack{\{g,\widetilde{g}\}\\g\in\Phi_M\setminus\widetilde{\Phi}_M}}\left(1+z^{2\deg{g}}\right)\\
        &=\prod_{d=\text{odd}}\left(1+z^d\right)^{\widetilde{N}_M(q,d)}\prod_{d\geq 1}\left(1+z^{2d}\right)^{\widetilde{R}_M(q,d)}
    \end{align*}
\end{proof}
\begin{theorem}\label{th:probability-separable}
     Let $S_U^M(n,q)$ denote the number of separable matrices in $\Un{n}{2}$ which are $M$-th power. Let $S_U(z)=1+\sum\limits_{n=1}^\infty S_U^M(n,q)$. Then we have that
    \begin{align*}
        S_U^M(z)=\prod_{d=\text{odd}}\left(1+\dfrac{z^d}{q^d+1}\right)^{\widetilde{N}_M(q,d)}\prod_{d\geq 1}\left(1+\dfrac{z^{2d}}{q^{2d}-1}\right)^{\widetilde{R}_M(q,d)}.
    \end{align*}
\end{theorem}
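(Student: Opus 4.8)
The plan is to upgrade the conjugacy-class count of \cref{prop:sep-conj-class} to an element count by weighting each separable $M$-th power class by its cardinality. Since the class of $g$ has $|\Un{n}{2}|/|C(g)|$ elements, where $C(g)$ is its centralizer, dividing by the group order turns the element-counting generating function into a sum of reciprocal centralizer orders over the admissible combinatorial data, namely
\[
\frac{S_U^M(n,q)}{|\Un{n}{2}|}=\sum_{\substack{\Delta\text{ admissible}\\ \deg\Delta=n}}\frac{1}{|C(g_\Delta)|}.
\]
Here the admissible data $\Delta$ are exactly those isolated in the proof of \cref{prop:sep-conj-class}: a finite choice, each taken with multiplicity one (forced by separability), of $\widetilde{M}$-power self-conjugate SCIM factors together with $\widetilde{M}$-power non-self-conjugate pairs $\{g,\widetilde{g}\}$.

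First I would record the centralizer of a separable element. Because the minimal and characteristic polynomials of a separable matrix coincide and split into distinct $\sim$-irreducible factors, the centralizer in $\Un{n}{2}$ is a maximal torus that decomposes as a direct product according to the factorization of $\ch_A$. Reading off the Wall--Taylor description recalled in \autoref{sec:conjugacy-centralizer}, a self-conjugate SCIM factor of (necessarily odd) degree $d$ contributes a factor isomorphic to $\Un{1}{2d}$, the norm-one subgroup of $\fqn{2d}^\times$, of order $q^d+1$, while a non-self-conjugate pair $\{g,\widetilde{g}\}$ with $\deg g=d$ contributes $\GL(1,\fqn{2d})=\fqn{2d}^\times$ of order $q^{2d}-1$. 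Thus $|C(g_\Delta)|$ is the product of one factor $q^{d}+1$ for each chosen self-conjugate polynomial of degree $d$ and one factor $q^{2d}-1$ for each chosen non-self-conjugate pair of degree $d$ (the latter occupying total degree $2d$).

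With these local orders in hand, the sum factors as an Euler product exactly as in the cycle-index formalism of Fulman, Neumann and Praeger: every admissible $\Delta$ is an independent choice, across the distinct $\sim$-irreducible types, of whether or not to include each available polynomial. Hence the generating function splits into a product of local factors, where a self-conjugate $\widetilde{M}$-power SCIM polynomial of degree $d$ contributes $\bigl(1+\tfrac{z^{d}}{q^{d}+1}\bigr)$ (and there are $\widetilde{N}_M(q,d)$ of them by \cref{lem:number-of-polynomials}), and a non-self-conjugate $\widetilde{M}$-power pair of degree $d$ contributes $\bigl(1+\tfrac{z^{2d}}{q^{2d}-1}\bigr)$ (with $\widetilde{R}_M(q,d)$ of them). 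Collecting these factors by degree yields the stated formula.

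The step requiring the most care is the centralizer computation together with the resulting cancellation: one must confirm both that the two local orders $q^{d}+1$ and $q^{2d}-1$ are attached to the correct type of factor and at the correct power of $z$, and that $|\Un{n}{2}|$ divides out cleanly enough for the global sum to separate into genuinely independent local factors indexed by $\sim$-irreducible polynomials. This is precisely where the torus decomposition of centralizers from \autoref{sec:conjugacy-centralizer} feeds the multiplicativity underlying the identity; once the local orders are verified, the remainder is the routine Euler-product bookkeeping already carried out for the class count in \cref{prop:sep-conj-class}.
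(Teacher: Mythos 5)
Your proposal is correct and follows essentially the same route as the paper: the paper likewise converts the class count of \cref{prop:sep-conj-class} into a probability generating function by invoking Wall's multiplicative centralizer decomposition, with local orders $q^d+1$ for a self-conjugate SCIM factor and $q^{2d}-1$ for a pair $\{g,\widetilde{g}\}$ (cited from Fulman--Neumann--Praeger, p.~32, where the paper's ``$q^{2m_j-1}$'' is a typo for $q^{2m_j}-1$), followed by the same Euler-product bookkeeping. Your write-up is in fact more explicit than the paper's about the orbit-stabilizer weighting and the (implicit) normalization of $S_U^M(z)$ by $|\Un{n}{2}|$, which the terse statement of the theorem leaves to the reader.
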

\begin{proof}
    We will use the work of Wall (see \cite{wa63}) for the computation of the corresponding probability.
    Given a matrix $A$ with $\ch_A(x)=\prod_{i,j}f_ig_j\widetilde{g_j}$ for appropriate choices of $f_i,g_j$, we have that the centralizer of $A$ is of order
    \begin{align*}
        C_{\Un{n}{2}}(A)=\prod_{i}C_{\Un{n_i}{2}}(X_i)\prod_{j}C_{\Un{2m_j}{2}}(Y_{j}),
    \end{align*}
    for $X_i$ being element of $\Un{n_i}{2}$ with $\ch_{X_i}(x)=f_i$ and $Y_i\in\Un{2m_i}{2}$ satisfies $\ch_{Y_j}(x)=g_j\widetilde{g_j}$. Since (See \cite{fnp05}, page 32)
    \begin{align*}
        |C_{\Un{n_i}{2}}(X_i)|=q^{n_i}+1\text{ and }|C_{\Un{2m_j}{2}}(Y_{j})|=q^{2m_j-1},
    \end{align*}
    we get the desired result.
\end{proof}

\section{Cyclic matrices}\label{sec:cyc}
    We will denote the matrix $\begin{pmatrix}
        C_f&I&0&\cdots&0&0\\
        0&C_f&I&\cdots&0&0\\
        \vdots&\vdots&\vdots&\ddots&\vdots&\vdots\\
        0&0&0&\cdots&C_f&I\\
        0&0&0&\cdots&0&C_f
    \end{pmatrix}$ by $U(f,n)$ where $C_f$ denotes the standard companion matrix of $f$ in $\Un{\deg f}{2}$. We will start with the following easy but crucial observation.
\begin{lemma}\label{lem:unitary-power}
The matrix $U(f,n)^M$ is conjugate to the matrix $U(g,n)$ where $U(f,1)^M\sim U(g,1)$.
\end{lemma}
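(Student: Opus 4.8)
The plan is to prove the statement by reducing everything to the rational canonical form and then invoking Wall's theorem to pass from conjugacy in $\GL(nd,\fqn{2})$ (where $d=\deg f$) to conjugacy in the unitary group. First I would record the structural meaning of the block matrix: for an irreducible $f$ the matrix $U(f,n)$ is cyclic (non-derogatory) with characteristic and minimal polynomial equal to $f^n$, i.e. it represents the $\fqn{2}[t]$-module $\fqn{2}[t]/(f^n)$. Consequently, over the algebraic closure $\overline{\mathbb{F}_q}$ one has $U(f,n)\sim\bigoplus_{i=1}^{d}J_n(\alpha_i)$, a direct sum of single Jordan blocks of size $n$, one for each (distinct) root $\alpha_i$ of $f$. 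The target $U(g,n)$ admits the analogous description in terms of the roots of $g$.

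The heart of the argument is the behaviour of a single Jordan block under the power map: for $\lambda\neq 0$ the matrix $J_n(\lambda)^M$ is again a \emph{single} Jordan block $J_n(\lambda^M)$ of the same size $n$. Granting this, I would argue as follows. Since $f$ is SCIM we have $f(0)\neq 0$, so every $\alpha_i\neq 0$ and the fact applies to each summand, giving $U(f,n)^M\sim\bigoplus_{i=1}^{d}J_n(\alpha_i^M)$ over $\overline{\mathbb{F}_q}$. The hypothesis $U(f,1)^M=C_f^M\sim C_g=U(g,1)$ says precisely that the semisimple matrix $C_f^M$, whose eigenvalues are the $\alpha_i^M$, is similar to a single companion matrix; being semisimple it is therefore cyclic, which forces the $\alpha_i^M$ to be pairwise distinct and $g=\prod_i(t-\alpha_i^M)$ to be irreducible of degree $d$. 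Hence the blocks $J_n(\alpha_i^M)$ have distinct eigenvalues and reassemble exactly into the Jordan form of $U(g,n)$. Thus $U(f,n)^M$ and $U(g,n)$ share the single elementary divisor $g^n$ over $\fqn{2}$, so they are conjugate in $\GL(nd,\fqn{2})$; by Wall's result conjugacy in $\GL$ is equivalent to conjugacy in $\Un{nd}{2}$, which yields the claim.

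The main obstacle I expect is justifying the single-block power fact $J_n(\lambda)^M\sim J_n(\lambda^M)$, which is exactly the delicate point of the whole lemma. Writing $J_n(\lambda)=\lambda I+N$ with $N$ the regular nilpotent, one has $J_n(\lambda)^M=\lambda^M I+M\lambda^{M-1}N+\cdots$, and one must check that the nilpotent remainder still has rank $n-1$ so that a single block survives. This is immediate when the leading coefficient $M\lambda^{M-1}$ is nonzero, but it can fail in characteristic $p$ when $p\mid M$ (there $J_n(\lambda)^p=\lambda^p I+N^p$ splits into several smaller blocks), so this is precisely where a coprimality hypothesis on $M$ is genuinely used. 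A cleaner packaging of the same computation, which I would likely adopt in the write-up, is through the multiplicative Jordan decomposition $U(f,n)=su$ with $s=\diag(C_f,\dots,C_f)$ semisimple (as $f$ is separable) and $u$ the commuting unipotent part: then $U(f,n)^M=s^Mu^M$ is again a commuting product of its semisimple part $s^M\sim\diag(C_g,\dots,C_g)$ and a unipotent part $u^M$, and one checks that $u\mapsto u^M$ preserves the regular unipotent type in the centralizer of $s$, so that $s^Mu^M$ carries exactly the combinatorial data of $U(g,n)$.
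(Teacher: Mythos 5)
The paper states \cref{lem:unitary-power} without any proof at all (it is introduced only as an ``easy but crucial observation''), so there is no in-paper argument to compare against; your write-up supplies the missing proof, and it is essentially correct. The structural reading of $U(f,n)$ as the module $\fqn{2}[t]/(f^n)$, the splitting over $\overline{\mathbb{F}_q}$ into blocks $J_n(\alpha_i)$ with the $\alpha_i$ distinct (irreducible polynomials over finite fields are separable), the rank count showing $J_n(\lambda)^M\sim J_n(\lambda^M)$ when $M\lambda^{M-1}\neq 0$, and the passage from $\GL$-conjugacy back to unitary conjugacy via Wall's theorem (which the paper quotes explicitly in Section 2) are all sound. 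Two small points deserve explicit mention in a final write-up: irreducibility of $g=\prod_i(t-\alpha_i^M)$ needs not just distinctness of the $\alpha_i^M$ (which you correctly extract from $C_f^M$ being cyclic and semisimple) but also the remark that Frobenius permutes the $\alpha_i^M$ transitively, since $\alpha_i^M=(\alpha_1^M)^{q^{2(i-1)}}$; and $U(g,n)$ is a legitimate unitary element because $g$, being the characteristic polynomial of the unitary matrix $C_f^M$, is again SCIM. Both are one-line fixes.

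The caveat you flag is genuine and worth stressing: as literally stated, for arbitrary $M\geq 2$, the lemma is \emph{false} when $p\mid M$ and $n\geq 2$. For instance, in characteristic $p$ one has $J_p(\lambda)^p=(\lambda I+N)^p=\lambda^pI+N^p=\lambda^pI$, so $U(f,p)^p$ is semisimple and cannot be conjugate to $U(g,p)$, whose minimal polynomial is $g^p$; more generally $J_n(\lambda)^M$ splits into several Jordan blocks whenever $p\mid M$. The paper only imposes $(M,q)=1$ in Section 6, not in the cyclic section, although the commented-out material in the source (which cites Lemma 7.2 of the companion paper and assumes $(M,q)=1$) suggests the authors had this hypothesis in mind. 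Your alternative packaging via the multiplicative Jordan decomposition $U(f,n)=su$, with $u$ regular unipotent in the centralizer of $s$ and $u\mapsto u^M$ type-preserving precisely when $p\nmid M$, is the cleaner route and mirrors how the companion papers \cite{KuSi22,PaSi22} handle the analogous statements; it also makes the role of coprimality transparent rather than buried in a binomial coefficient.
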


\begin{proposition}\label{prop:power-cyclic-conjugacy}
    Let $cC_U^M(n,q)$ denote the number of conjugacy classes of cyclic matrices in $\Un{n}{2}$ which are $M$-th power. Let $cC_U^M(z)=1+\sum\limits_{n=1}^\infty cC_U^M(n,q)z^n$. Then we have that
    \begin{align*}
        cC_U^M(z)=&\prod\limits_{d=\text{ odd}}\left(1-z^d\right)^{-\widetilde{N}_M(q,d)}
        \times\prod\limits_{d\geq 1}\left(1-z^{2d}\right)^{-\widetilde{R}_M(q,d)}
    \end{align*}
\end{proposition}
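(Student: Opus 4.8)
The plan is to reduce the count to a product over $\sim$-irreducible polynomials exactly as in the separable case of \cref{prop:sep-conj-class}, with the single change that a cyclic class permits each polynomial to occur with arbitrary multiplicity rather than at most once. First I would recall from the parameterization in \cref{sec:conjugacy-centralizer} that a conjugacy class is cyclic precisely when the partition $\lambda(\phi)$ attached to every $\sim$-irreducible $\phi$ has a single part; thus a cyclic class is a direct sum of blocks $U(\phi, a_\phi)$ with $a_\phi \geq 0$ and $\sum_\phi a_\phi \deg\phi = n$, where a non-self-conjugate $\phi$ is paired with $\widetilde\phi$ carrying the same multiplicity. The task is then to decide, block by block, which of these can be an $M$-th power, and to check that the decision is insensitive to the multiplicity $a_\phi$.

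The crucial input is \cref{lem:unitary-power}: since $U(f, a)^M \sim U(g, a)$ whenever $C_f^M \sim C_g$, the power map carries a single cyclic block of size $a$ to a single cyclic block of the same size. Hence $U(\phi, a)$ is an $M$-th power if and only if the companion block $C_\phi = U(\phi, 1)$ is, a condition independent of $a$. To upgrade this to the statement that a cyclic class is an $M$-th power exactly when each of its blocks is, I would use that the centralizer in $\GL(n, \fqn{2})$ of a cyclic matrix $C$ is the commutative algebra $\fqn{2}[C]$; any root $\alpha$ of $C$ commutes with $C = \alpha^M$, so $\alpha$ lies in this centralizer, preserves the primary decomposition of $C$, and therefore splits as a direct sum of roots of the individual blocks. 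Invoking \cref{le:root-implies-m} and \cref{lem:m-implies-root} then gives the criterion: a self-conjugate block $U(\phi, a)$ with $\phi$ SCIM of odd degree $d$ is an $M$-th power iff $\phi \in \widetilde{\Phi}_M$, and a block coming from a non-self-conjugate pair $\{g, \widetilde g\}$ is an $M$-th power iff $g \in \Phi_M \setminus \widetilde{\Phi}_M$.

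With the criterion established, assembling the generating function is routine bookkeeping. Each $\widetilde{M}$-power SCIM polynomial of odd degree $d$ — there are $\widetilde{N}_M(q,d)$ of them by \cref{lem:number-of-polynomials} — may be used with any multiplicity $a \geq 0$, contributing the geometric series $\sum_{a \geq 0} z^{ad} = (1 - z^d)^{-1}$; raising to the number of admissible polynomials yields the factor $(1 - z^d)^{-\widetilde{N}_M(q,d)}$. Likewise each of the $\widetilde{R}_M(q,d)$ pairs $\{g, \widetilde g\}$ furnishes a block of degree $2d$ of arbitrary multiplicity, giving $(1 - z^{2d})^{-\widetilde{R}_M(q,d)}$. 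Taking the product over all relevant $d$ produces the claimed formula for $cC_U^M(z)$.

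The step demanding the most care is the block-by-block reduction in the second paragraph: one must confirm that a cyclic matrix can acquire an $M$-th root only through roots respecting its primary decomposition, so that the multiplicities $a_\phi$ play no role beyond recording block sizes. This is precisely where \cref{lem:unitary-power} and the commutativity of $\fqn{2}[C]$ do the essential work; once it is in place, the remaining computation mirrors the separable case with every factor $(1 + z^{\deg})$ replaced by $(1 - z^{\deg})^{-1}$.
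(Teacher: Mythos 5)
Your proposal is correct and takes essentially the same route as the paper's proof: reduce to the primary decomposition of a cyclic class, decide for each block via \cref{lem:unitary-power} together with \cref{le:root-implies-m} and \cref{lem:m-implies-root} whether it is an $M$-th power (a condition independent of the block size), and then multiply geometric series $\left(1-z^d\right)^{-1}$ over the $\widetilde{N}_M(q,d)$ admissible SCIM polynomials and $\left(1-z^{2d}\right)^{-1}$ over the $\widetilde{R}_M(q,d)$ admissible pairs $\{g,\widetilde{g}\}$. The only difference is that you make explicit a step the paper leaves implicit — that any $M$-th root of a cyclic matrix $C$ lies in the commutative algebra $\fqn{2}[C]$ and hence splits along the primary decomposition — which is a welcome elaboration rather than a departure.
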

\begin{proof}
    In the primary decomposition of a given cyclic matrix $A$, there is a unique summand corresponding to each irreducible factor of $\ch_A(x)$. If $\phi$, an irreducible factor of $\ch_A(x)$ satisfies $\widetilde{\phi}=\phi$, then the block corresponding to $\phi$ is an $M$-th power 
    if and only if $\phi$ is $\widetilde{M}$-power polynomial.
    Again if $\phi\neq \widetilde{\phi}$, then the block corresponding to $\phi\widetilde{\phi}$ is an $M$-th power if and only if $\phi$ is an $M$-power polynomial. 

    Hence if the combinatorial data attached with $A$ 
    is given by $\Delta_A=\{(f,\lambda_f)|f\in\Phi\setminus\{0\}\}$, then $A$ will be an $M$-th power if and only if
    \begin{enumerate}
        \item for $f=\widetilde{f}$, we have $f\in\widetilde{\Phi}_M$,
        \item for $f\neq\widetilde{f}$, we have $f\in\Phi_M$.
    \end{enumerate}
    Hence we get that
    \begin{align*}
        cC_U^M(z)&=\prod\limits_{f\in\widetilde{\Phi}_M}\left(1+z^{\deg f}+z^{2\deg f}+\ldots\right)\prod\limits_{\substack{\{g,\widetilde{g}\}\\g\in\Phi_M\setminus\widetilde{\Phi}_M\}}}\left(1+z^{2\deg f}+z^{4\deg f}+\ldots\right)\\
        &=\prod\limits_{d=\text{ odd}}\left(1-z^d\right)^{-\widetilde{N}_M(q,d)}
        \times\prod\limits_{d\geq 1}\left(1-z^{2d}\right)^{-\widetilde{R}_M(q,d)}
    \end{align*}
\end{proof}
\begin{theorem}\label{thm:power-cyclic-element}
    Let $C_U^M(n,q)$ denote the number of cyclic matrices in $\Un{n}{2}$ which are $M$-th power. Let $C_U^M(z)=1+\sum\limits_{n=1}^\infty C_U^M(n,q)z^n$. Then we have that
    \begin{align*}
        C_U^M(z)=&\prod\limits_{d=\text{odd}}\left(1+\dfrac{z^d}{\left(q^d+1\right)\left(1-\left(\dfrac{z}{q}\right)^d\right)}\right)^{\widetilde{N}_M(q,d)}\\
        \times&\prod\limits_{d\geq 1}\left(1+\dfrac{z^{2d}}{\left(q^{2d}-1\right)\left(1-\left(\dfrac{z}{q}\right)^{2d}\right)}\right)^{\widetilde{R}_M(q,d)}
    \end{align*}
\end{theorem}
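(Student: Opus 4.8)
The plan is to promote \cref{prop:power-cyclic-conjugacy} from a count of conjugacy classes to a count of matrices, by the same device that takes \cref{prop:sep-conj-class} to \cref{th:probability-separable}: each cyclic conjugacy class $A$ in $\Un{N}{2}$ that is an $M$-th power should be weighted by $1/|C_{\Un{N}{2}}(A)|$, since the class containing $A$ has exactly $|\Un{N}{2}|/|C_{\Un{N}{2}}(A)|$ elements. Summing these reciprocals over all $M$-th-power cyclic classes of a fixed size $N$ yields the proportion of such matrices, and it is this weighted sum that the generating function $C_U^M(z)$ records coefficientwise.

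First I would recall the primary decomposition of a cyclic matrix: $A$ is conjugate to a direct sum of primary cyclic blocks $U(\phi,n_\phi)$, one for each irreducible factor $\phi$ of $\ch_A(x)$, with the $\sim$-conjugate factors $\{\phi,\widetilde\phi\}$ amalgamated. By Wall's description of unitary centralizers (\cite{wa63}), the centralizer order factors as a product over these primary blocks, exactly as in the proof of \cref{th:probability-separable}. \cref{lem:unitary-power} guarantees that $U(\phi,n)$ is an $M$-th power if and only if the companion block $U(\phi,1)=C_\phi$ is, so the $M$-th-power condition is insensitive to the multiplicity $n$ and is governed block-by-block by whether $\phi\in\widetilde\Phi_M$ (when $\phi=\widetilde\phi$) or $\phi\in\Phi_M$ (when $\phi\neq\widetilde\phi$), precisely the criterion already isolated in \cref{prop:power-cyclic-conjugacy}. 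Because both the $M$-th-power condition and the centralizer order are multiplicative across the primary decomposition, the weighted sum factors into an Euler product indexed by the polynomial types.

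The heart of the computation is the local factor attached to a single polynomial type. For a SCIM polynomial $\phi$ with $\phi=\widetilde\phi$ of (necessarily odd) degree $d$, I would determine from Wall's formulas that $|C_{\Un{nd}{2}}(U(\phi,n))|=(q^d+1)\,q^{d(n-1)}$, which for $n=1$ recovers the value $q^d+1$ used in \cref{th:probability-separable}; summing the resulting geometric series gives $\sum_{n\ge 1} z^{nd}/\big((q^d+1)q^{d(n-1)}\big)=z^d/\big((q^d+1)(1-(z/q)^d)\big)$, and adding the empty contribution $1$ produces the first local factor. For a pair $\{\phi,\widetilde\phi\}$ with $\deg\phi=d$, the analogous centralizer order is $(q^{2d}-1)\,q^{2d(n-1)}$ (again matching the value $q^{2d}-1$ at $n=1$), and the same summation yields the factor $1+z^{2d}/\big((q^{2d}-1)(1-(z/q)^{2d})\big)$. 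Raising these to the powers $\widetilde N_M(q,d)$ and $\widetilde R_M(q,d)$, which count the admissible polynomials of each type, and taking the product over $d$ gives the stated formula.

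I expect the main obstacle to be the centralizer-order computation: verifying $|C_{\Un{nd}{2}}(U(\phi,n))|=(q^d+1)q^{d(n-1)}$ (and its conjugate-pair analogue) for arbitrary multiplicity $n$, rather than just the separable case $n=1$ already cited from \cite{fnp05}. This requires extracting from Wall's general classification the order of the centralizer of a single primary cyclic block in the unitary group, and checking that the extra factor $q^{d(n-1)}$ (respectively $q^{2d(n-1)}$) is exactly what the geometric series in the target formula demands; once this order is pinned down, the remaining summations and the Euler-product bookkeeping are routine.
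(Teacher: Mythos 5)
Your proposal is correct and follows essentially the same route as the paper's proof: weight each $M$-th-power cyclic conjugacy class by the reciprocal of its centralizer order, use the block-by-block criterion of \cref{prop:power-cyclic-conjugacy} (via \cref{lem:unitary-power}), and sum the geometric series arising from the primary-block centralizer orders $q^{d(m-1)}(q^d+1)$ and $q^{2d(m-1)}(q^{2d}-1)$. The centralizer-order computation you flag as the main obstacle is exactly the step the paper handles by citing \cite{fnp05} (page 38) rather than reproving it.
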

\begin{proof}
    As discussed in \cref{th:probability-separable}, similarly here in this case centralizer of a matrix $A$, is direct product of
    with one term for each SCIM polynomial dividing the $\ch_A(x)$ and each pair $\{g,\widetilde{g}\}$ such that
    $g\widetilde{g}$ dividing $\ch_A(x)$. Note that if $f=\widetilde{f}$ is irreducible of degree $d$ and $f(x)^m|\ch_A(x)$, then the term corresponding to this factor is given by $q^{d(m-1)}(q^d+1)$ (see \cite{fnp05}, page 38).
    The term corresponding to the pair $\{g,\widetilde{g}\}$ is $q^{2d(m-1)}(q^{2d}-1)$ where $\left(g(x)\widetilde{g}(x)\right)^m|\ch_A(x)$ and $g$ is of degree $d$. Then we have that
    \begin{align*}
        C^M_U(z)&=\prod\limits_{d=\text{ odd}} \left(1+\dfrac{z^d}{q^d+1}+\dfrac{z^{2d}}{q^d(q^d+1)}+\dfrac{z^{3d}}{q^{2d}(q^d+1)}+\ldots\right)^{\widetilde{N}_M(q,d)}\\
        &\times\prod\limits_{d\geq 1}\left(1+\dfrac{z^{2d}}{q^{2d}-1}+\dfrac{z^{4d}}{q^{2d}(q^{2d}-1)}+\dfrac{z^{6d}}{q^{4d}(q^{2d}-1)}+\ldots\right)^{\widetilde{R}_M(q,d)}
    \end{align*}
\end{proof}
\section{Semisimple matrices}\label{sec:semisimple-matrices}
In this section we will concentrate on the case where $M$ is a prime satisfying $(M,q)=1$.
This can be generalized with conscious effort in case of prime power (as was done in Section $6$ of \cite{KuSi22})
or general $M$ satisfying $(M,q)=1$ using indicator functions (see Section $6$ of \cite{PaSi22}). Thus to avoid cumbersome expressions, we choose to work with prime $M$. We will need the following result, which dates back to 1955.
\begin{lemma}[Section $3$, \cite{Bu55}]\label{factorization-m-power}
    Suppose that the polynomial $f(x)$, of degree $n$, is irreducible
over $\fq$ and that $t$ is the order of its root. Let $m$ be a natural number such that $(m,q)=1$ and $m=m_1m_2$, where $(m_1,t)=1$ and each prime factor of $m_2$ is a divisor of $t$. Then
\begin{enumerate}
    \item each root of $f(x^m )$ has order of the form $dm_2t$, where $d$ is a divisor
of $m_1$;
\item if $d|m_1$ , then $f(x^m )$ has exactly $nm_2\phi(d)/\mathcal{M}(dm_2t,q)$ irreducible
factors over $\fq$ with roots of order $dm_2t$ and each of these factors has degree
$\mathcal{M}(dm_2t,q)$.
\end{enumerate}
Here $\mathcal{M} (s, q)$ is the least natural number satisfying $q^{\mathcal{M}(s,q)}\equiv 1\pmod{s}$, if the integer $s$ is relatively prime to $q$.
\end{lemma}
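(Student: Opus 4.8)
\emph{Proof proposal.} The plan is to translate the factorization of $f(x^m)$ into a purely multiplicative question about orders of roots in cyclic groups $\fqn{N}^\times$. Let $\alpha$ be a root of $f$, so $\alpha$ has order $t$ and $\fq(\alpha)=\fqn{n}$ with $n=\mathcal{M}(t,q)$. Because $f$ is separable and $(m,q)=1$, the polynomial $f(x^m)$ is separable as well, and its $mn$ roots are exactly the $m$-th roots of the $n$ conjugates $\alpha,\alpha^q,\dots,\alpha^{q^{n-1}}$. If $\beta$ is any root of $f(x^m)$ then $\beta^m$ is a root of $f$, hence has order $t$; writing $s=\operatorname{ord}(\beta)$ and using the cyclic-group identity $\operatorname{ord}(\beta^m)=s/\gcd(s,m)$, I obtain the single governing equation
\[
s = t\,\gcd(s,m).
\]
Everything is to be read off from this relation together with the arithmetic of $m=m_1m_2$.

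First I would establish part (1). The hypotheses force $\gcd(m_1,m_2)=1$, since a common prime would divide $t$ through $m_2$ yet be coprime to $t$ through $m_1$. Solving $s=t\gcd(s,m)$ one prime at a time then gives: primes dividing neither $m$ nor $t$ cannot divide $s$; for $p\mid t$ the exponent is pinned down to $v_p(s)=v_p(t)+v_p(m_2)$; and for $p\mid m_1$ the only constraint is $0\le v_p(s)\le v_p(m_1)$. This yields $s=d\,m_2\,t$ with $d\mid m_1$, and the same computation produces the by-product $\gcd(s,m)=dm_2$, which I reuse below.

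For part (2), the degree claim is immediate: a root of order $s$ generates $\fqn{\mathcal{M}(s,q)}$ over $\fq$, so every irreducible factor whose roots have order $dm_2t$ has degree $\mathcal{M}(dm_2t,q)$; since the Frobenius $x\mapsto x^q$ preserves multiplicative order, the roots of any fixed order split into Galois orbits all of this common size. It therefore suffices to count roots of $f(x^m)$ of exact order $dm_2t$ and divide by $\mathcal{M}(dm_2t,q)$. Fixing one root $\gamma_0$ of $f$, its $m$ preimages under $x\mapsto x^m$ are disjoint from those of the other conjugates, so I count preimages of a single $\gamma_0$ and multiply by $n$. Working inside the cyclic group of $(dm_2t)$-th roots of unity, the equation $\beta^m=\gamma_0$ is solvable (the solvability condition $t\mid dm_2t/\gcd(m,dm_2t)$ holds because the quotient is exactly $t$) and has precisely $\gcd(m,dm_2t)=dm_2$ solutions; these are the preimages $\beta$ of $\gamma_0$ with $\operatorname{ord}(\beta)\mid dm_2t$. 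Since by part (1) the only orders occurring among them are $d'm_2t$ with $d'\mid d$, writing $E_{d'}$ for the number of preimages of exact order $d'm_2t$ gives $\sum_{d'\mid d}E_{d'}=dm_2=m_2\sum_{d'\mid d}\phi(d')$; Möbius inversion over $d\mid m_1$ then yields $E_d=m_2\phi(d)$. Multiplying by $n$ and dividing by the degree produces the stated count $nm_2\phi(d)/\mathcal{M}(dm_2t,q)$.

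The main obstacle is the counting step, specifically controlling the equation $\beta^m=\gamma_0$ inside the relevant cyclic group: one must verify that it is genuinely solvable and pin down the exact number $dm_2$ of solutions of bounded order before inverting. The arithmetic of $\gcd(m,dm_2t)$ and the clean separation of the primes of $m_1$ from those of $m_2$ and $t$ is where all the care is needed; as a global consistency check, the counts sum correctly, $\sum_{d\mid m_1}nm_2\phi(d)=nm=\deg f(x^m)$.
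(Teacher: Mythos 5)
Your proposal addresses a statement that the paper itself does not prove: \cref{factorization-m-power} is imported verbatim from Section~3 of Butler's 1955 paper \cite{Bu55}, so there is no in-paper argument to compare against. Judged on its own, your proof is correct, and it is essentially the same order-theoretic argument as Butler's original: reduce everything to the single relation $s=t\gcd(s,m)$ for $s=\operatorname{ord}(\beta)$, solve it prime by prime, and count. I checked the key arithmetic: $\gcd(m_1,m_2)=1$ follows as you say; the valuation analysis correctly forces $v_p(s)=v_p(t)+v_p(m_2)$ for $p\mid t$ and leaves $0\le v_p(s)\le v_p(m_1)$ free for $p\mid m_1$, giving $s=dm_2t$ and the byproduct $\gcd(s,m)=dm_2$; in the cyclic group of order $dm_2t$ one indeed has $\gcd(m,dm_2t)=dm_2$, so $\beta^m=\gamma_0$ is solvable with exactly $dm_2$ solutions of order dividing $dm_2t$; and the identity $\sum_{d'\mid d}E_{d'}=m_2 d=m_2\sum_{d'\mid d}\phi(d')$ inverts via $\phi=\mu*\mathrm{id}$ to $E_d=m_2\phi(d)$. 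Two small points you should make explicit rather than leave implicit: first, separability of $f(x^m)$ (which you assert) needs the observation that $f(0)\neq 0$ and $(m,q)=1$, so the derivative $mx^{m-1}f'(x^m)$ shares no root with $f(x^m)$; second, multiplying the per-root count by $n$ requires that each of the $n$ conjugate roots of $f$ contributes the same $E_d$ --- this is automatic in your setup, since your computation of the solution count uses only $\operatorname{ord}(\gamma_0)=t$, which holds for every root of $f$ (alternatively, Frobenius commutes with $x\mapsto x^m$ and preserves orders), but the justification should be stated. With those two sentences added, the proof is complete, and the closing consistency check $\sum_{d\mid m_1}nm_2\phi(d)=nm$ is a nice confirmation.
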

\begin{corollary}\label{block-size-semisimple}
    Let $M$ be a prime satisfying $(M,q)=1$. Assume $f$ to be an irreducible monic polynomial of degree $d$ over $\fqn{2}$. Then 
    \begin{enumerate}
        \item either $f$ is an $M$-power polynomial
        \item or $f(x^M)$ is an irreducible polynomial.
    \end{enumerate}
\end{corollary}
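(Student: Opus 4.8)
The plan is to read everything off \cref{factorization-m-power}, applied with the ground field $\fqn{2}$ in place of $\fq$ (so that $\mathcal{M}(s,q)$ there is replaced throughout by $\mathcal{M}(s,q^2)$) and with $m=M$. Fix a root $\alpha$ of $f$ and let $t$ be its multiplicative order. Since $f$ is irreducible of degree $d$ over $\fqn{2}$, the element $\alpha$ has degree $d$ over $\fqn{2}$, which is to say $\mathcal{M}(t,q^2)=d$. As $M$ is prime, the decomposition $M=m_1m_2$ appearing in the lemma has only two possibilities: either $(M,t)=1$, giving $m_1=M$ and $m_2=1$, or $M\mid t$, giving $m_1=1$ and $m_2=M$. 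I would handle these separately.

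In the case $(M,t)=1$ I would look at the divisor $1$ of $m_1$ in part (2) of the lemma. The corresponding roots of $f(x^M)$ have order $m_2t=t$, so the associated irreducible factors have degree $\mathcal{M}(t,q^2)=d$. Thus $f(x^M)$ possesses an irreducible factor of degree $d$, which is exactly the assertion that $f$ is an $M$-power polynomial; we are in alternative (1).

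In the case $M\mid t$ the lemma forces $m_2=M$ and leaves only the divisor $1$ of $m_1=1$, so \emph{every} irreducible factor of $f(x^M)$ has the same degree $\mathcal{M}(Mt,q^2)$, and there are exactly $dM/\mathcal{M}(Mt,q^2)$ of them. The statement then reduces to the arithmetic claim that $\mathcal{M}(Mt,q^2)\in\{d,Md\}$: if it equals $d$ then $f(x^M)$ has a factor of degree $d$ and $f$ is an $M$-power polynomial, while if it equals $Md$ then there is a single factor and $f(x^M)$ is irreducible. To establish the claim I would show $\mathcal{M}(Mt,q^2)/d\in\{1,M\}$. One direction is immediate: $t\mid Mt$ gives $d=\mathcal{M}(t,q^2)\mid \mathcal{M}(Mt,q^2)$. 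For the other, write $t=M^a t'$ with $a\geq 1$ and $(M,t')=1$, so that $\mathcal{M}(t,q^2)=\lcm\!\left(\operatorname{ord}_{M^a}(q^2),\operatorname{ord}_{t'}(q^2)\right)$ by the Chinese remainder theorem; in particular $\operatorname{ord}_{M^a}(q^2)\mid d$ and $\operatorname{ord}_{t'}(q^2)\mid d$. Using the standard order-lifting fact that $\operatorname{ord}_{M^{a+1}}(q^2)$ equals $\operatorname{ord}_{M^a}(q^2)$ or $M\operatorname{ord}_{M^a}(q^2)$, both $\operatorname{ord}_{M^{a+1}}(q^2)$ and $\operatorname{ord}_{t'}(q^2)$ divide $Md$, whence $(q^2)^{Md}\equiv 1 \imod{Mt}$ and so $\mathcal{M}(Mt,q^2)\mid Md$. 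Since $M$ is prime, $\mathcal{M}(Mt,q^2)/d$ divides $M$ and is therefore $1$ or $M$.

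The only genuinely nontrivial ingredient is this last arithmetic step, namely controlling how the multiplicative order of $q^2$ changes when the modulus is multiplied by the prime $M$; the remainder is bookkeeping against the two alternatives of \cref{factorization-m-power}. The order-lifting fact needs slight care at $M=2$, but since $M\mid t$ ensures $a\geq1$, the ratio $\operatorname{ord}_{M^{a+1}}(q^2)/\operatorname{ord}_{M^{a}}(q^2)\in\{1,M\}$ holds in all cases.
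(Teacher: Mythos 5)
Your proposal is correct, and it takes the route the paper itself intends: the paper states this corollary with no written proof, as an immediate consequence of \cref{factorization-m-power} applied over $\fqn{2}$ with $m=M$ prime, which is exactly your case split $m_1=M,\,m_2=1$ versus $m_1=1,\,m_2=M$. The one genuinely nontrivial point the paper leaves implicit --- that in the second case $\mathcal{M}(Mt,q^2)/d\in\{1,M\}$, so the uniform factor degree is either $d$ (giving an $M$-power polynomial) or $Md$ (giving irreducibility of $f(x^M)$) --- you supply correctly via the order-lifting fact, including the observation that $M\mid t$ forces $a\geq 1$ and so the $M=2$ exception causes no trouble.
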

Note that if the set of all $\widetilde{M}$-power polynomials are subset of $M$-power polynomials.
Consider $\mathcal{S}=\Phi\setminus\Phi_M$. This set consists of two kinds of polynomials as follows:
\begin{enumerate}
    \item polynomial $f$ such that $f=\widetilde{f}$ but it is not an $M$-th power,
    \item polynomial $f$ such that $f\neq\widetilde{f}$ and is not an $M$-th power.
\end{enumerate}
The number of polynomial of first kind of fixed degree $d$ will be denoted by $\widetilde{S}'_M(d,q)$ and the number of pair of polynomials $\{g,\widetilde{g}\}$ of second kind of degree $d$ will be denoted by $S'_M(d,q)$. Note that
\begin{align*}
    S'_M(d,q)=\widetilde{N}(q,d)-\widetilde{N}_M(q,d).
\end{align*}
\begin{proposition}\label{prop:semisimple-conjugacy}
    Let $cS_U^M(n,q)$ denote the number of conjugacy classes of semisimple matrices in $\Un{n}{2}$ which are $M$-th powers. Let $cS_U^M(z)=1+\sum\limits_{n=1}^\infty cS^M_U(n,q)z^n$. Then we have that
    \begin{align*}
        cS_U^M(z)=&\prod\limits_{d=\text{ odd }}\left(1-z^d\right)^{-\widetilde{N}_M(q,d)}\times\prod\limits_{d\geq 1}\left(1-z^{2d}\right)^{-\widetilde{R}_M(q,d)}\\
        \times&\prod\limits_{d=\text{ odd }}\left(1-z^{dM}\right)^{-\widetilde{S}'_M(d,q)}\times 
        \prod\limits_{d\geq 1}\left(1-z^{2dM}\right)^{-S'_M(d,q)}.
    \end{align*}
\end{proposition}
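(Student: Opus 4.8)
The plan is to count conjugacy classes of semisimple $M$-th power matrices by organizing them according to the $\sim$-irreducible factors appearing in their combinatorial data, exactly as in \cref{prop:sep-conj-class}, but now allowing each factor to occur with arbitrary multiplicity since the minimal polynomial (not the characteristic polynomial) must be separable. A semisimple matrix $A$ is determined by a function $\lambda\colon\widetilde\Phi\to\mathcal P_n$ with each part-size equal to $1$, i.e.\ by assigning to each $\sim$-irreducible polynomial $f$ a nonnegative integer multiplicity $m_f$ (the number of times $C_f$ appears as a block), subject to $\lambda(f)=\lambda(\widetilde f)$ and the degree constraint $\sum_f m_f\deg f=n$. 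Because these multiplicities are independent across the (pairs of) polynomials, the generating function factors as an Euler product with one local factor per $f=\widetilde f$ and one per pair $\{g,\widetilde g\}$.

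The key step is to determine, for each type of polynomial, exactly which multiplicities $m_f$ yield an $M$-th power, and then to write down the corresponding local generating function. First I would treat a self-conjugate factor $f=\widetilde f$ of odd degree $d$. If $f\in\widetilde\Phi_M$ (an $\widetilde M$-power polynomial) then, by \cref{lem:m-implies-root} together with \cref{lem:unitary-power} applied blockwise, the summand $C_f^{\oplus m}$ is an $M$-th power for \emph{every} multiplicity $m\ge 0$, contributing the local factor $\sum_{m\ge 0}z^{dm}=(1-z^d)^{-1}$, raised to the number $\widetilde N_M(q,d)$ of such polynomials. If instead $f$ is self-conjugate but \emph{not} an $M$-th power polynomial, then by \cref{block-size-semisimple} the polynomial $f(x^M)$ is irreducible of degree $dM$; I would argue that $C_f^{\oplus m}$ admits an $M$-th root precisely when $m$ is a multiple of $M$, because the only way to produce the eigenvalues of $C_f$ as $M$-th powers inside a unitary group is to use the companion block of the irreducible degree-$dM$ polynomial $f(x^M)$, whose $M$-th power is $C_f^{\oplus M}$ (after conjugation). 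This forces $m\in M\mathbb Z_{\ge0}$ and gives the local factor $(1-z^{dM})^{-1}$ raised to $\widetilde S'_M(d,q)$. The factors with $f\neq\widetilde f$ are handled analogously in pairs $\{g,\widetilde g\}$: the $M$-power polynomials among them contribute $(1-z^{2d})^{-1}$ to the power $\widetilde R_M(q,d)$, while those in $\mathcal S$ contribute $(1-z^{2dM})^{-1}$ to the power $S'_M(d,q)$.

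I would then assemble these four families of local factors into the single Euler product, which is precisely the claimed expression. The main obstacle I anticipate is the middle step: rigorously justifying that for a self-conjugate $f\notin\widetilde\Phi_M$ the block $C_f^{\oplus m}$ is an $M$-th power \emph{if and only if} $M\mid m$. The ``if'' direction follows from \cref{block-size-semisimple} by exhibiting the irreducible companion block of $f(x^M)$ as an explicit root, but the ``only if'' direction requires showing no smaller multiplicity works; this needs a careful eigenvalue/Frobenius-orbit argument, namely that any semisimple $\alpha\in\U$ with $\alpha^M$ having $f$ as a minimal-polynomial factor must contribute eigenvalues in complete Galois orbits of size $dM$ over $\fqn{2}$, so that the $f$-isotypic multiplicity of $\alpha^M$ is automatically divisible by $M$. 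Verifying that this orbit-size count is compatible with the unitary (rather than merely $\GL$) constraint $\alpha^{q^d+1}$-type conditions is the delicate point, but it is exactly the content already distilled in \cref{factorization-m-power} and \cref{block-size-semisimple}, so the argument should go through by bookkeeping on root orders.
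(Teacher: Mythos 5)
Your proposal is correct and is essentially the paper's own proof: the same classification of $\sim$-irreducible factors into four types, the same determination that the block multiplicity is unrestricted for $\widetilde{M}$-power SCIM polynomials and for $M$-power pairs $\{g,\widetilde g\}$ but must lie in $M\mathbb{Z}_{\geq 0}$ otherwise, and the same assembly into an Euler product with local factors $(1-z^d)^{-1}$, $(1-z^{2d})^{-1}$, $(1-z^{dM})^{-1}$, $(1-z^{2dM})^{-1}$ raised to $\widetilde{N}_M(q,d)$, $\widetilde{R}_M(q,d)$, $\widetilde{S}'_M(d,q)$, $S'_M(d,q)$ respectively. The divisibility-by-$M$ step you flag as delicate is precisely what the paper also delegates to \cref{block-size-semisimple} (for prime $M$ coprime to $q$, a non-$M$-power $f$ has $f(x^M)$ irreducible of degree $dM$, forcing eigenvalues of any root to come in Frobenius orbits of size $dM$), so your route coincides with theirs.
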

\begin{proof}
        Let $A$ be a semisimple matrix and the combinatorial data attached with $A$ 
    is given by $\Delta_A=\{(f,\lambda_f)|f\in\Phi\setminus\{0\}\}$. Here we encounter with four types of polynomials, as described below:
    \begin{enumerate}
        \item $f$ is an $\widetilde{M}$-power SCIM polynomial,
        \item $f$ is an ${M}$-power polynomial, but satisfies $f\neq\widetilde{f}$,
        \item $f$ is an SCIM polynomial but neither $M$-power nor $\widetilde{M}$-power,
        \item anything other than the above three classes.
    \end{enumerate}
    Then we will have the following constituents coming from each of the factors occurring in the factorization of the characteristic polynomial. Note that
    \begin{enumerate}
        \item if $f\in \widetilde{\Phi}_M$ then $\lambda_f\in\mathbb{Z}_{\geq 0}$,
        \item since $f\neq \widetilde{f}$ is an $M$ power polynomial, we get that $\lambda_f=\lambda_{\widetilde{f}}\in\mathbb{Z}_{\geq 0}$,
        \item if $f$ is an SCIM polynomial but not $\widetilde{M}$-power polynomial, then by \cref{block-size-semisimple} we get that $\lambda_f\in M\mathbb{Z}_{\geq 0}$,
        \item in the last case we get that $f\neq \widetilde{f}$ and $f(x^M)$ is irreducible, hence we get that in this case as well $\lambda_f=\lambda_{\widetilde{f}}\in M\mathbb{Z}_{\geq 0}$.
    \end{enumerate}
    Hence putting all the cases together we get that
    \begin{align*}
        cS_U^M(z)=&\prod\limits_{f\in\widetilde{\Phi}_M}\left(1+z^{\deg f}+z^{2\deg f}+\cdots\right)
        \times \prod\limits_{\substack{\{f,\widetilde{f}\}\\f\in\Phi_M}}\left(1+z^{2\deg f}+z^{4\deg f}+\cdots\right)\\
        \times &\prod\limits_{\substack{f\in\widetilde{\Phi}\setminus\widetilde{\Phi}_M}}\left(1+z^{M\deg f}+z^{2M\deg f}+\cdots\right)\\
        \times &\prod\limits_{\substack{\{f,\widetilde{f}\}\\f\in\Phi\setminus\{\widetilde{\Phi}\cup\Phi_M\}}}\left(1+z^{2M\deg f}+z^{4M\deg f}+\cdots\right)\\
        =&\prod\limits_{d=\text{ odd }}\left(1-z^d\right)^{-\widetilde{N}_M(q,d)}\times\prod\limits_{d\geq 1}\left(1-z^{2d}\right)^{-\widetilde{R}_M(q,d)}\\
        \times&\prod\limits_{d=\text{ odd }}\left(1-z^{dM}\right)^{-\widetilde{S}'_M(d,q)}\times 
        \prod\limits_{d\geq 1}\left(1-z^{2dM}\right)^{-S'_M(d,q)}.
    \end{align*}
\end{proof}
\begin{theorem}\label{thm:M-power-semisimple}
    Let $S_U^M(n,q)$ denote the number of semisimple matrices in $\Un{n}{2}$ which are $M$-th powers. Let $S_U^M(z)=1+\sum\limits_{n=1}^\infty S^M_U(n,q)z^n$. Then we have that
    \begin{align*}
        S_U^M(z)=&\prod\limits_{d=\text{ odd }}\left(1+\sum\limits_{m\geq 1}\dfrac{z^{dm}}{|\U(m,q^{2d})|}\right)^{\widetilde{N}_M(q,d)}
        \times\prod\limits_{d\geq 1}\left(1+\sum\limits_{m\geq 1}\dfrac{z^{2dm}}{|\GL(m,q^{2d})|}\right)^{\widetilde{R}_M(q,d)}\\
        \times&\prod\limits_{d=\text{ odd }}\left(1+\sum\limits_{m\geq 1}\dfrac{z^{dmM}}{|\U(mM,q^{2d})|}\right)^{\widetilde{S}'_M(d,q)}
        \times
        \prod\limits_{d\geq 1}\left(1+\sum\limits_{m\geq 1}\dfrac{z^{2dmM}}{|\GL(mM,q^{2d})|}\right)^{S'_M(d,q)}.
    \end{align*}
\end{theorem}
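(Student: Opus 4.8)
The plan is to run the Fulman--Neumann--Praeger cycle-index argument, upgrading the conjugacy-class count of \cref{prop:semisimple-conjugacy} to a count of elements (equivalently, a proportion) exactly as was done in passing from \cref{prop:power-cyclic-conjugacy} to \cref{thm:power-cyclic-element} and from the separable class count to \cref{th:probability-separable}. First I would record that the number of semisimple $M$-th power matrices with a prescribed type profile, divided by $|\Un{n}{2}|$, equals the sum over the admissible conjugacy classes of $1/|C_{\Un{n}{2}}(A)|$, since each class contributes $|\Un{n}{2}|/|C_{\Un{n}{2}}(A)|$ elements. Thus the generating function is obtained from the class-counting product of \cref{prop:semisimple-conjugacy} by replacing each monomial $z^{(\deg)\cdot m}$ that records ``a polynomial appearing with multiplicity $m$'' by $z^{(\deg)\cdot m}$ divided by the corresponding centralizer factor.

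Next I would invoke Wall's description of centralizers of semisimple elements: for a semisimple $A$ the centralizer is a direct product with one factor for each self-conjugate SCIM polynomial and one factor for each conjugate pair $\{g,\widetilde g\}$. Concretely, a self-conjugate $f$ of degree $d$ occurring with multiplicity $m$ contributes the factor $\U(m,q^{2d})$, while a pair $\{g,\widetilde g\}$ with $g$ of degree $d$ and multiplicity $m$ contributes $\GL(m,q^{2d})$ (specializing $m=1$ recovers $q^d+1$ and $q^{2d}-1$ used in \cref{th:probability-separable}). Because both the admissibility conditions of \cref{prop:semisimple-conjugacy} and the centralizer order factor over the constituent polynomials, the proportion generating function factorizes as an Euler product with one local factor per polynomial or pair.

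I would then read off the four families of local factors as in \cref{prop:semisimple-conjugacy}. For a $\widetilde M$-power SCIM polynomial of odd degree $d$ any multiplicity $m\ge 0$ is allowed, giving $\sum_{m\ge 0} z^{dm}/|\U(m,q^{2d})|$ with exponent $\widetilde N_M(q,d)$; for an $M$-power conjugate pair of degree $d$ the factor is $\sum_{m\ge 0} z^{2dm}/|\GL(m,q^{2d})|$ with exponent $\widetilde R_M(q,d)$. For the remaining two families \cref{block-size-semisimple} forces the multiplicity into $M\mathbb Z_{\ge 0}$: an SCIM polynomial that is not $\widetilde M$-power contributes $\sum_{m\ge 0} z^{dmM}/|\U(mM,q^{2d})|$ with exponent $\widetilde S'_M(d,q)$, and a non-$M$-power conjugate pair contributes $\sum_{m\ge 0} z^{2dmM}/|\GL(mM,q^{2d})|$ with exponent $S'_M(d,q)$. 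Splitting off the $m=0$ term as the leading $1$ in each factor yields precisely the stated product.

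The substance is bookkeeping rather than any deep obstruction: I must ensure the degree contributions and centralizer types are matched correctly (a conjugate pair of degree $d$ with multiplicity $m$ contributes $2dm$ to $n$ and has centralizer $\GL(m,q^{2d})$, \emph{not} a unitary factor), and that the multiplicity-in-$M\mathbb Z$ constraint for the two non-power families is transported faithfully from the conjugacy-class statement. This last constraint is where the hypothesis that $M$ is prime with $(M,q)=1$ genuinely enters, through the dichotomy of \cref{block-size-semisimple}. The only nonformal input is Wall's factorization of the centralizer into $\U$ and $\GL$ factors over the eigenspaces; everything else is manipulation of the resulting product.
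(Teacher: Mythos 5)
Your proposal is correct and follows essentially the same route as the paper's proof: it upgrades the class count of \cref{prop:semisimple-conjugacy} to an element count by dividing each local monomial by Wall's centralizer order, namely $|\U(\lambda_f,q^{2\deg f})|$ for an SCIM polynomial $f$ and $|\GL(\lambda_f,q^{2\deg f})|$ for a pair $\{f,\widetilde f\}$, with the multiplicity constraint $\lambda_f\in M\mathbb{Z}_{\geq 0}$ for the non-power families coming from \cref{block-size-semisimple}. The only difference is presentational: you spell out the cycle-index bookkeeping that the paper leaves implicit in its two-line argument.
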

\begin{proof}
    Note that centralizer order of $A$ with combinatorial data $(f,\lambda_f)$ where $f\in\widetilde{\Phi}$ is given by $|\U(\lambda_f,q^{2\deg f})|$ if $f=\widetilde{f}$ and for the pair $\{f,\widetilde{f}\}$ is given by $|\GL(\lambda_f,q^{2\deg f})|$. Hence the result follows from \cref{prop:semisimple-conjugacy}.
\end{proof}

\end{document}